\renewcommand{\leq}{\leqslant}
\renewcommand{\geq}{\geqslant}
\def\build#1_#2^#3{\mathrel{
\mathop{\kern 0pt#1}\limits_{#2}^{#3}}}
\newcommand{\E}{\mathbb{E}}
\newcommand{\V}{\mathbb{V}}
\theoremstyle{plain}
\newtheorem{theorem}{Theorem}
\newtheorem{lemma}{Lemma}
\theoremstyle{definition}
\newtheorem{remark}{Remark}
\newcommand{\Z}{{\mathbb{Z}}}
\newcommand{\N}{{\mathbb{N}}}
\begin{document}
\title{Truncation of long-range percolation models  with square non-summable interactions}
\author{Alberto M. Campos\footnote{Departamento de Matem{\'a}tica, Universidade Federal de Minas Gerais, Av. Ant\^onio
Carlos 6627 C.P. 702 CEP 30123-970 Belo Horizonte-MG, Brazil} and Bernardo N.B. de Lima$^\dagger$}
\date{}
\maketitle


\begin{abstract}We consider some problems related to the truncation question in long-range percolation. Probabilities are given that certain long-range oriented bonds are open; assuming that these probabilities are not summable, we ask if the probability of percolation is positive when we truncate the graph, disallowing bonds of range above a possibly large but finite threshold. This question is still open if the set of vertices is $\Z^2$. We give some conditions under which the answer is affirmative. One of these results generalizes a previous result in [Alves, Hil\'ario, de Lima, Valesin, Journ. Stat. Phys. {\bf 122}, 972 (2017)]. 
\end{abstract}

{\footnotesize Keywords: long-range percolation; truncation; oriented percolation \\
MSC numbers:  60K35, 82B43}

\section{Introduction}

\noindent

Long-range statistical mechanics models are an old topic that have been studied for a long time, e.g., \cite{ACCN, D1, D2} and \cite{FS} for Ising models or \cite{AKN, AN} and \cite{NS} for percolation models.

One of the more intriguing questions in long-range percolation is the so-called {\em truncation question}. In words (we will become more formal later), this question can be stated as follows: consider a translation-invariant long-range percolation model that percolates with positive probability. Is the infinity of range indeed crucial for the occurrence of percolation?

More precisely, let $G=(\V,\E)$ be a transitive graph, where the set of edges $\E$ can be partitioned as $\E=\cup_{n=1}^\infty \E_n$, where $\E_n$ is the set of edges of length $n$. Let $(p_n)_n\in [0,1]$ be a sequence of parameters. Consider on this graph an independent bond percolation model where bonds are open independently, in which each bond $e$ is open with probability $p_{\|e\|}$, where $\|e\|$ is the length of $e$.

Thus, the probability space that describes this model is $(\Omega, {\cal F}, P)$, where $\Omega = \{0,1\}^{\mathbb{E}}$, $\mathcal{F}$ is the canonical product $\sigma$-algebra, and $P = \prod_{e \in \mathbb{E}} \mu_e$, where $\mu_e({\omega}_e = 1) = p_{\|e\|} = 1- \mu_e({\omega}_e = 0)$. An element $\omega \in \Omega$ is called a percolation configuration.

Given a positive integer $K$, define the truncated sequence $(p_n^K)_n$ as

\begin{equation}
p_n^K=\left\{
\begin{array}
[c]{l}%
p_n,\mbox{  if } n \leq K,\\
0,\ \mbox{   if } n>K,
\end{array}\right.\label{eq:truncation}
\end{equation}
and the truncated measure $P^K = \prod_{e \in \mathbb{E}} \mu^K_e$, where $\mu^K_e({\omega}_e = 1) = p^K_{\|e\|} = 1- \mu^K_e({\omega}_e = 0)$.

Then, the truncation question can be restated as: fix a vertex $0\in\V$ (remind that we consider transitive graphs), given a sequence $(p_n)_n$ where $P(0\leftrightarrow\infty)>0$, does there exist a large enough truncation constant $K$ such that $P^K(0\leftrightarrow\infty) >0$ ? (Here we are using the standard notation in percolation where $(0\leftrightarrow\infty)$ means the set of configurations $\omega\in\Omega$ such that there exists an infinite open path starting from the origin.)

Whenever $G=(\V,\E)$ is the $d$-dimensional hypercubic lattice with long range bonds parallel to the coordinate axes, i.e., $\V=\Z^d$ and $\E_n=\{\langle x,x+n.\vec{e}_i \rangle ;x\in\Z^d,i\in\{1,\dots,d\}\}$, where $\vec{e}_i$ is the $i$-th vector in the canonical basis of $\Z^d$; the truncation question can be placed for summable sequences $(p_n)_n$ as well as for non-summable sequences. In the latter case, if $\sum_n p_n= \infty$ by the Borel-Cantelli Lemma, it follows that $P(0\leftrightarrow\infty)=1$.

If $d=1$, it is an exercise to see that the truncation question has a negative answer; when $d\geq 3$, it was shown in \cite{FL} that the truncation question has an affirmative answer. The case $d=2$ is still an open problem and several works tackled this question adding some extra hypotheses upon the sequence $(p_n)_n$ like \cite{Be, FL, FLS, LS, MSV} and \cite{SSV}. 

In some of these results, it is shown that $\lim_{K\rightarrow\infty}P^K(0\leftrightarrow\infty)=1$, which is a little stronger than the truncation question. Indeed in any situation, we have the weak convergence $P^K\Rightarrow P$ when $K\rightarrow\infty$, but the Portmanteau Theorem cannot be applied because the boundary (with respect the product topology) of the event $(0\leftrightarrow\infty)$ has positive probability concerning the measure $P$. 

In Section \ref{normal}, we will give an affirmative answer, for the case $d=2$, with some extra hypotheses that are not included in the papers cited above.

An analogous truncation question can be stated for the $q$-state ferromagnetic Potts model (see Proposition 2 of \cite{FL}) and rephrased as a percolation question, due to the Fortuin-Kastelyn random-cluster representation. It was shown in \cite{F1} and \cite{F2} that the magnetization of the truncated $q$-states long-range Potts model and the probability of percolation on the long-range processes are related by inequality 

$$\mu_{\phi^K_n}^{\beta,s}\geq\frac{1}{q}+\frac{q-1}{q}P^K(0\leftrightarrow\infty),$$
if the $n$-range potential function $(\phi_n)_n$ and the long-range percolation parameters are related by $p_n=\frac{1-\exp (-2\beta\phi_n)}{1+(q-1)\exp (-2\beta\phi_n)}$.

When the sequence $(p_n)_n$ is summable and there is percolation with positive probability, the truncation question can also be stated. The papers \cite{MS} and \cite{Be} are examples where affirmative answers are given. However in \cite{BCC}, a negative answer was given in the context of the Potts model with $q=3$. 

In each section, we will consider the truncation question on a different type of graph. In Section \ref{orient}, we study the truncation question on a special oriented graph, generalizing the result of Theorem 1 of \cite{AHLV}. 

\section{Truncation question on a long-range square lattice}\label{normal}
\noindent

In this section, consider an anisotropic version of the graph $G$. Let $G^{an}=(\Z^2, \E^{an})$ be the graph whose set of bonds is $\E^{an}=\E^v\cup(\cup_n\E^h_n)$, where $\E^v=\{\langle x,x+(0,1) \rangle ;x\in\Z^2\}$ is the set of nearest neighbor vertical bonds and $\E^h_n=\{\langle x,x+(n,0) \rangle ;x\in\Z^2\}$ is the set of horizontal bonds with length $n$. Given the parameters $\delta$ and $(p_n)_n$, each bond $e$ is open, independently, with probability $\delta$ or $p_n$, if $e$ belongs to $\E^v$ or $\E^h_n$, respectively. We continue denoting by $P$ and $P^K$ the non-truncated and truncated at $K$ measures.

Let us remember the H. Kesten result that $p_v+p_h=1$ is the critical curve for independent anisotropic percolation on the ordinary square lattice $\mathbb{L}^2$ (see \cite{Ke} or \cite{Gr}), where vertical and horizontal bonds are open with probabilities $p_v$ and $p_h$, respectively. Indeed, in the next theorem, we will use the following lemma:

\begin{lemma}\label{VH}Consider an independent and anisotropic percolation model on the square lattice $\mathbb{L}^2$ with parameters $p_v$ and $p_h$. Given any $p_v>0$, it holds that $\lim_{p_h\rightarrow 1^-}P_{p_v,p_h}((0,0)\leftrightarrow\infty)=1$.
\end{lemma}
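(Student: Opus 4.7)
The plan is to set up a block renormalisation that couples the anisotropic model with an independent Bernoulli site percolation on $\Z^2$ whose parameter $q$ can be made arbitrarily close to $1$ as $p_h\to 1^-$; the lemma will then follow from the classical Peierls estimate that the site-percolation density $\theta^{\mathrm{site}}_{\Z^2}(q)$ tends to $1$ as $q\to 1$.

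Fix a large integer $L$, to be chosen at the end. For each $(I,J)\in\Z^2$ let $B_{I,J}$ be the collection of the $L$ horizontal bonds of row $J$ joining the vertices at columns $IL,IL+1,\dots,(I+1)L$, together with the $L$ vertical bonds from row $J$ to row $J+1$ at columns $IL,IL+1,\dots,(I+1)L-1$. Declare $B_{I,J}$ \emph{good} if all of its horizontal bonds are open and at least one of its vertical bonds is open. Different blocks involve pairwise disjoint sets of bonds, so the goodness indicators form an i.i.d.\ Bernoulli field on $\Z^2$ with density
$$q=q(L,p_v,p_h):=p_h^L\bigl(1-(1-p_v)^L\bigr).$$

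A short case analysis would then show that whenever $B_{I,J}$ and $B_{I',J'}$ are both good and $(I',J')$ is a lattice neighbour of $(I,J)$, the anchor vertices $(IL,J)$ and $(I'L,J')$ lie in the same open cluster of $\mathbb{L}^2$: horizontal neighbours are joined by concatenating the two open horizontal rows through the common vertex $((I+1)L,J)$, while vertical neighbours are joined via the open vertical bond supplied by the lower block, glued to the two horizontal rows. As a consequence, the open cluster of the origin contains, as a subset of anchor vertices, the good-block cluster of $B_{0,0}$, whence
$$P_{p_v,p_h}\bigl((0,0)\leftrightarrow\infty\bigr)\;\geq\;\theta^{\mathrm{site}}_{\Z^2}(q).$$

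To conclude, given $\varepsilon>0$ I would first take $L$ large enough (using $p_v>0$) that $1-(1-p_v)^L>\sqrt{1-\varepsilon}$, and then $p_h$ close enough to $1$ that $p_h^L>\sqrt{1-\varepsilon}$, which forces $q>1-\varepsilon$. The classical Peierls bound, summing $(1-q)^n$-weighted counts of closed $*$-connected site circuits around the origin, gives $\theta^{\mathrm{site}}_{\Z^2}(q)\to 1$ as $q\to 1$; combined with the FKG monotonicity of $P_{p_v,p_h}\bigl((0,0)\leftrightarrow\infty\bigr)$ in $p_h$, this yields the desired limit. The most delicate step is the bookkeeping in the definition of the blocks: they must be bond-disjoint (so the renormalised field is genuinely independent) while still forcing adjacent good blocks to remain connected through common anchor vertices. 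Once that is arranged, the rest of the argument is a routine reduction to a well-known two-dimensional fact.
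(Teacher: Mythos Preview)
Your argument is correct. The blocks $B_{I,J}$ are genuinely bond-disjoint (the horizontal bonds in row $J$ partition cleanly at the anchor columns $IL$, and the vertical bonds are restricted to columns $IL,\dots,(I+1)L-1$), and the connectivity check for adjacent good blocks goes through exactly as you describe: horizontal neighbours share the anchor vertex $((I{+}1)L,J)$, and for vertical neighbours the open vertical bond of the lower block, sandwiched between the two fully open horizontal rows, produces the required path. The parameter $q=p_h^L\bigl(1-(1-p_v)^L\bigr)$ can indeed be pushed arbitrarily close to $1$ by first fixing $L$ (using $p_v>0$) and then sending $p_h\to 1^-$, after which the standard Peierls bound for Bernoulli site percolation on $\Z^2$ gives $\theta^{\mathrm{site}}_{\Z^2}(q)\to 1$.

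The paper, however, does not prove this lemma at all: it is stated without proof, with Kesten's theorem that the anisotropic critical curve on $\mathbb{L}^2$ is $p_v+p_h=1$ cited as background. The implicit route is to invoke that deep result (together with, say, exponential decay in the subcritical dual or a sprinkling argument on top of the supercritical phase) to conclude that $\theta(p_v,p_h)\to 1$ as $p_h\to 1^-$. Your approach is genuinely different and in a sense more elementary: you bypass Kesten's theorem entirely and reduce the question to a direct renormalisation plus a Peierls count, at the cost of the block bookkeeping. What the paper's route buys is brevity, since the statement is immediate once Kesten is granted; what yours buys is a self-contained argument that does not require knowing the exact critical curve.
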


\begin{theorem}\label{class1}Consider the anisotropic percolation model on the graph $G^{an}$ defined above. Given any $\delta>0$, if  the sequence $(p_n)_n$ satisfies $\sum_n p_np_{n+N}=\infty$ for some $N>0$, it holds that $${\displaystyle \lim_{K\to\infty}P^K\{(0,0) \leftrightarrow \infty \} = 1}.$$
\end{theorem}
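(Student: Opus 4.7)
Proof Plan:

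The strategy mirrors the Grimmett--Marstrand renormalization in the proof of Theorem \ref{mainor}, but tailored to $G^{an}$; the target renormalized model is anisotropic nearest-neighbor percolation on $\mathbb{L}^2$, so that Lemma \ref{VH} can be invoked. In the coupling, vertical renormalized bonds correspond to single vertical bonds of $G^{an}$ (density $\delta$, bounded away from $0$), while horizontal renormalized bonds correspond to composite events whose density tends to $1$ as $K\to\infty$.

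The building block: for $(x,m)\in\Z^2$ and $n\geq 1$, let $R^+_{(x,m)}(n)$ be the event that the three bonds $\langle(x,m),(x+n+N,m)\rangle$, $\langle(x+n+N,m),(x+n+N,m+1)\rangle$ and $\langle(x+N,m+1),(x+n+N,m+1)\rangle$ are all open; then $P(R^+_{(x,m)}(n))=\delta\,p_n\,p_{n+N}$ and on this event $(x,m)$ is connected to $(x+N,m+1)$ by an open path. Distinct $n$ produce disjoint bonds, so the events $R^+_{(x,m)}(n)$, $n=1,\ldots,K$, are independent, and $S^+_{(x,m)}:=\bigcup_{n=1}^K R^+_{(x,m)}(n)$ satisfies
\[
P^K(S^+_{(x,m)})\;\geq\;1-\exp\!\Bigl(-\delta\sum_{n=1}^K p_n p_{n+N}\Bigr),
\]
which tends to $1$ by the hypothesis $\sum_n p_n p_{n+N}=\infty$. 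Reflected variants $S^-_{(x,m)}$ (connecting $(x,m)$ to $(x-N,m+1)$) and downward variants $\widetilde S^{\pm}_{(x,m)}$ (connecting $(x,m)$ to $(x\pm N,m-1)$) are defined analogously and have probability tending to $1$; the downward hooks can be set up to use a vertical bond at column $x$ and row-$(m-1)$ horizontals, disjoint from the upward hook's bonds.

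Next, identify the sublattice $\{(2kN,m):k,m\in\Z\}\subset\Z^2$ with $\mathbb{L}^2$ in the obvious way. Declare the vertical renormalized bond between $(2kN,m)$ and $(2kN,m+1)$ open iff the single $G^{an}$-bond $\langle(2kN,m),(2kN,m+1)\rangle$ is open; declare the horizontal renormalized bond between $(2kN,m)$ and $(2(k+1)N,m)$ open iff the composite event $S^+_{(2kN,m)}\cap\widetilde S^+_{((2k+1)N,m+1)}$ occurs, which supplies an open path $(2kN,m)\rightsquigarrow((2k+1)N,m+1)\rightsquigarrow(2(k+1)N,m)$ in $G^{an}$. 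Excluding from $S^+_{(x,m)}$ the index $n=N$ (and analogous indices from $\widetilde S^+$), the composite horizontal events never use the specific vertical $G^{an}$-bonds at columns $2k'N$, so the renormalized vertical bonds are independent $\mathrm{Bernoulli}(\delta)$ and independent of the horizontal layer. The remaining source of dependence is that two horizontally adjacent renormalized horizontal bonds may share some row-$m$ bonds near their common column.

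As in the proof of Theorem \ref{mainor}, handle this via a Grimmett--Marstrand dynamic exploration of the renormalized cluster of the origin, using $S^+$ when expanding the exploration to the right, $S^-$ when expanding to the left, and the $\widetilde S^{\pm}$ versions when expanding vertically. Then each newly tested hook uses only $G^{an}$-bonds not previously revealed, so conditionally the probability of opening a new renormalized horizontal bond is $\geq(1-\epsilon)^2$ and of a new renormalized vertical bond is exactly $\delta$. The renormalized cluster of the origin thus stochastically dominates the open cluster of the origin in an independent anisotropic $\mathbb{L}^2$-percolation with parameters $(\delta,(1-\epsilon)^2)$; any infinite cluster of the origin in this model produces an infinite open cluster of $(0,0)$ in $G^{an}$, so Lemma \ref{VH} (with $K\to\infty$, hence $\epsilon\to 0$) gives $\lim_{K\to\infty}P^K((0,0)\leftrightarrow\infty)=1$. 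The main obstacle is the combinatorial bookkeeping in this last step: choosing the four symmetry variants of the hook and the exploration order so that at each step only previously untouched $G^{an}$-bonds are exposed, in direct parallel with the $T^{\pm}$ construction of Theorem \ref{mainor}.
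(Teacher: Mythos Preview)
Your overall target (anisotropic nearest–neighbor percolation on $\mathbb{L}^2$, then Lemma~\ref{VH}) coincides with the paper's, but your route is considerably more elaborate than the one actually used, and the ``combinatorial bookkeeping'' you flag is in fact the whole difficulty you have created for yourself.

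The paper's hook is purely horizontal: $E_{(x,y)}(n)$ opens the two bonds $\langle (x,y),(x+n,y)\rangle$ and $\langle (x+n,y),(x-N,y)\rangle$, of lengths $n$ and $n+N$, producing a connection $(x,y)\leftrightarrow(x-N,y)$ that stays in a single row and uses no vertical bond. Thus the horizontal renormalized events are automatically independent of the vertical ones (which are just single $G^{an}$-verticals), and independence between \emph{adjacent} horizontal renormalized bonds is obtained by a static parity trick rather than a dynamic exploration: one chooses $M_1<M_2$ with $\sum_{n\leq M_1}p_np_{n+N}$ and $\sum_{M_1<n\leq M_2}p_np_{n+N}$ both large, sets $H^-=\bigcup_{n\leq M_1}E(n)$ and $H^+=\bigcup_{M_1<n\leq M_2}E(n)$, and alternates $H^-/H^+$ according to the parity of the horizontal renormalized coordinate. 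Adjacent hooks then use disjoint bond sets by construction, so the $(X_e)_e$ are genuinely i.i.d.\ and Lemma~\ref{VH} applies directly; no Grimmett--Marstrand exploration is needed.

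By contrast, your three–bond hook straddles two rows and consumes a vertical $G^{an}$-bond, which forces you to exclude indices, to introduce four symmetry variants, and to run a dynamic exploration. Beyond the bookkeeping, note that Lemma~\ref{GM} as stated is for \emph{oriented} percolation with a single success probability $\gamma\to 1$, whereas you need an unoriented exploration in which the vertical conditional success probability is fixed at $\delta$ and only the horizontal one tends to $1$; you would have to supply a separate domination lemma for that situation. None of this is insurmountable, but it is all avoided by the paper's two simplifications: keep the hook in one row, and split the index range in two to decouple neighbours.
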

\begin{proof}Fix $N>0$ such that $\sum_n p_np_{n+N}=\infty$, given any $\epsilon>0$ we can choose integers $M_1$ and $M_2$ satisfying 
$$\exp \left[-\sum_{n=1}^{M_1}p_np_{n+N}\right]< \epsilon \quad\mbox{ and } \quad\exp \left[-\sum_{n=M_1+1}^{M_2}p_np_{n+N}\right]< \epsilon.$$

Given a vertex $(x,y)\in\Z^2$ and $n\in\Z_+$, let us define the following events:
\begin{align*}
&E_{(x,y)}(n)=\{\langle(x,y);(x+n,y)\rangle\mbox{ and }\langle(x+n,y);(x-N,y)\rangle\mbox{ are open}\},\\
&H^-_{(x,y)}=\cup_{n=1}^{M_1} E_{(x,y)}(n)\quad \mbox{ and }\quad H^+_{(x,y)}=\cup_{n=M_1+1}^{M_2} E_{(x,y)}(n).
\end{align*}

Observe that the events $H^{\pm}_.$ use bonds with length at most $M_2+N$; therefore taking $K=M_2+N$ and by definition of $M_1$ and $M_2$, we have that
\begin{align*}
P^K(H^-_{(x,y)})&=1-P^K(\cap_{n=1}^{M_1} (E_{(x,y)}(n))^c)=1-\prod_{n=1}^{M_1}(1-p_np_{n+N})\\
&\geq 1-\exp\left[-\sum_{n=1}^{M_1} p_np_{n+N}\right]>1-\epsilon.
\end{align*}
Analogously, the same bound holds for the probability of $H^+_{(x,y)}$.

Now, we will couple a percolation process on the ordinary square lattice $\mathbb{L}^2$ (with only nearest neighbors non-oriented bonds) in the following manner: given $e=\langle(v_1,v_2);(u_1,u_2)\rangle$ a bond of $\mathbb{L}^2$, define the sequence of events $(X_e)_e$ as follows
\begin{equation*}
\begin{aligned}[c]
 X_e=\begin{cases}
       H^-_{(Nv_1,Nv_2)}, \text{ if }  v_2=u_2,\ u_1-v_1=1 \text{ and $v_1$ is even};\\
       H^+_{(Nv_1,Nv_2)}, \text{ if }  v_2=u_2,\ u_1-v_1=1 \text{ and $v_1$ is odd};\\
       \{\langle (Nv_1,Nv_2);(Nv_1,Nv_2+1)\rangle\mbox{ is open }\}, \text{ if }  v_1=u_1 \mbox{ and }\ u_2-v_2=1. 
	   \end{cases}
\end{aligned}
\end{equation*}

We declare each bond $e$ of $\mathbb{L}^2$ as {\em red} if and only if the event $X_e$ occurs. The appropriate choice of the events $H^-_.$ and $H_.^+$ ensures that the events $(X_e)_e$ are independent. Thus, bonds in $\mathbb{L}^2$ are red following an independent anisotropic bond percolation, where each vertical bond is open with probability $\delta$ and horizontal bonds are open with probability at least $1-\epsilon$. It follows from the definition of $(X_e)_e$ that an infinite red path starting from the origin in $\mathbb{L}^2$ implies an infinite open path starting from the origin in the graph $G^{an}$. By Lemma \ref{VH}, we can conclude that ${\displaystyle \lim_{K\to\infty}P^K\{(0,0) \leftrightarrow \infty \} = 1}.$
\end{proof}

The next lemma, due Kalikow and Weiss (see Theorem 2 of \cite{KW}), is an important fact in the proof of our next result. We state it as we will need later.
\begin{lemma}\label{KW} Consider an independent long-range bond percolation model on the one-dimension graph $(\Z^+,\{\langle i,j\rangle;i,j\in\Z^+\})$ with parameters $(p_n)_n$. If $\sum_n p_n=\infty$ and $\gcd \{n;p_n>0\}=1$, then the random graph on $\Z^+$ formed by open bonds is connected a.s.. Moreover, for all $l\in\Z^+$ it holds that $\lim_{L\rightarrow\infty} P(\{0,1,\dots,l\} \mbox{ are connected in }\{0,1,\dots,L\})=1$.
\end{lemma}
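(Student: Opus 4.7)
The plan is to first reduce a.s.\ connectedness of the random graph to the ``moreover'' statement: if for every fixed $l$ one has $P(\{0,\ldots,l\}\text{ connected in }\{0,\ldots,L\})\to 1$ as $L\to\infty$, then by monotonicity the event that $\{0,\ldots,l\}$ is connected in $\Z_+$ has probability $1$ for every $l$, and intersecting over $l\in\N$ yields that the random graph is a.s.\ connected.

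For the finite-box statement, I would combine the two hypotheses as follows. The divergence $\sum_n p_n=\infty$ forces $S:=\{n:p_n>0\}$ to be infinite, and together with $\gcd(S)=1$ one extracts a finite subset $\{n_1,\ldots,n_s\}\subset S$ of greatest common divisor $1$; the Sylvester--Frobenius (``Chicken McNugget'') theorem then yields a threshold $F$ such that every integer $m\geq F$ has a non-negative decomposition $m=\sum_j c_j n_j$. Thus, for every pair $(i,v)$ with $v-i\geq F$ there is a deterministic monotone path from $i$ to $v$ whose increments lie in $\{n_1,\ldots,n_s\}$, which is open with a fixed positive probability depending only on its length. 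These canonical paths drive a two-stage connection scheme: first, merge $\{0,\ldots,l\}$ into a common auxiliary vertex $H$ slightly to the right of $l$ by short gcd-paths of bounded length (succeeding with some constant probability $c(l)>0$); second, use $H$ as a launcher to reach distant ``hubs'' $v\in[H+1,L]$ via single long edges $\langle H,v\rangle$. The long edges $\langle H,v\rangle$ for different $v$ are independent of each other and of the merging edges, and $\sum_v p_{v-H}=\sum_{n=1}^{L-H}p_n\to\infty$ as $L\to\infty$, so by the first Borel--Cantelli lemma at least one such hub is reached with probability tending to $1$.

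The main obstacle is that the merging step in Stage 1 succeeds only with a constant probability $c(l)<1$, so a single attempt cannot drive the overall probability to $1$. The remedy is to run many independent merging attempts in parallel, assigned to pairwise disjoint families of short edges (so that the attempts are jointly independent and the probability of \emph{some} merging succeeding tends to $1$ as more attempts are included); each successful merging vertex $H_k$ then serves as a launcher, and its long-range hub-connection succeeds with probability tending to $1$ by the argument above. Keeping all these edge families pairwise disjoint so that the overall events remain independent is the main bookkeeping task. The gcd hypothesis is indispensable throughout: without it, the graph would decompose into disjoint residue classes, and no long edge from any $H_k$ could ever reach a vertex outside its own class.
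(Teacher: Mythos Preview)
The paper does not give its own proof of this lemma; it is quoted from Kalikow and Weiss (Theorem~2 of \cite{KW}) and used as a black box. So there is nothing in the paper to compare your argument against.

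That said, your proposal has a genuine gap. The reduction of a.s.\ connectedness to the finite-box statement is fine. The problem is the two-stage scheme. Stage~2 as written is idle: if Stage~1 has already merged $\{0,\ldots,l\}$ into the single vertex $H$, the set is connected and you are done; sending a further long edge from $H$ to some hub $v$ contributes nothing to the event in question. You notice this and propose to boost $c(l)$ toward $1$ by running many independent Stage~1 merging attempts on ``pairwise disjoint families of short edges''. But every such merging path starts at some $i\in\{0,\ldots,l\}$ and uses steps of length at most $\max_j n_j$, so its first edge is one of the finitely many short bonds incident to $\{0,\ldots,l\}$. Only boundedly many edge-disjoint attempts are therefore available, the success probability is capped strictly below $1$, and the hypothesis $\sum_n p_n=\infty$ is never used.

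The roles of the two hypotheses need to be swapped. The divergence $\sum_n p_n=\infty$ is what furnishes unboundedly many independent trials: from each $i\in\{0,\ldots,l\}$ one has, with probability tending to $1$ as $L\to\infty$, many open long edges landing far to the right. The gcd condition (via your Frobenius threshold $F$) is then used locally, to tie the landing points together by short monotone paths lying in disjoint regions of $\{l+1,\ldots,L\}$, so that the trials are genuinely independent. One clean way to organise this is to cut $\{l+1,\ldots,L\}$ into successive blocks, each chosen long enough to carry a fixed positive amount of the divergent series; in each block consider the event ``every $i\in\{0,\ldots,l\}$ has an open edge into this block, and the $l+1$ landing points are joined inside the block by gcd-paths''. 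These block events are independent and have a common positive lower bound, and their number tends to infinity with $L$, which forces the probability to $1$.
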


\begin{theorem}\label{class2}Consider the anisotropic percolation model on the graph $G^{an}$. Given any $\delta>0$, if  the sequence $(p_n)_n$ satisfies $\limsup_{N\rightarrow\infty}\sum_n p_np_{n+N}>0$, it holds that $${\displaystyle \lim_{K\to\infty}P^K\{(0,0) \leftrightarrow \infty \} = 1}.$$
\end{theorem}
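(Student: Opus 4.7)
The plan is to mimic the hop-event construction of Theorem \ref{class1} but combine many different displacement lengths via Kalikow--Weiss (Lemma \ref{KW}), because the weaker hypothesis $\limsup_N \sum_n p_n p_{n+N} = \eta > 0$ supplies many $N$'s for which $\sum_n p_n p_{n+N}$ is bounded away from $0$, rather than a single $N$ with infinite sum. First, I would extract $N_1 < N_2 < \cdots$ with $\sum_n p_n p_{n+N_j}\geq\eta/2$, thin so that $\gcd\{N_j\}=1$, and reduce to the case $\lim p_n = 0$ (as in Corollary \ref{coror}).

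Exploiting $p_n \to 0$, I would inductively pick pairwise disjoint finite intervals $I_j \subset \mathbb N$ and compatible $N_j$ (the latter chosen large enough that $\sum_{n<A_j}p_n p_{n+N_j}<\eta/4$, where $A_j = \min I_j$) so that $\sum_{n\in I_j}p_n p_{n+N_j}\geq\eta/4$. With the $I_j$'s pairwise disjoint, the hop events
\[
H^-_{(x,y),j}=\bigcup_{n\in I_j}\{\langle(x,y),(x+n,y)\rangle,\,\langle(x+n,y),(x-N_j,y)\rangle\text{ both open}\}
\]
(and their reflections $H^+_{(x,y),j}$ with displacement $+N_j$) use pairwise disjoint $G^{an}$-bonds as $x\in\mathbb Z$ and $j\geq 1$ vary on a fixed row $y$, so they are mutually independent, each of probability at least $1-e^{-\eta/4}$.

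Now on each row $y$ consider the independent long-range 1D percolation on $\mathbb Z$ in which, for each $x$ and $j$, the (undirected) bond between $x$ and $x+N_j$ is declared open iff $H^+_{(x,y),j}$ occurs. Its parameter sum is infinite, its support has $\gcd=1$, so Lemma \ref{KW} gives, for every $l$ and every $\epsilon>0$, an $L$ with $P^K(\{0,\ldots,l\}\text{ connected in }\{0,\ldots,L\}\text{ on row }y)\geq 1-\epsilon$ whenever $K$ exceeds the largest bond length used in the finitely many hop events involved. Finally, as in Theorem \ref{class1}, I would couple with an anisotropic nearest-neighbour Bernoulli percolation on $\mathbb L^2$: vertical super-edges inherit probability $\delta$ from $G^{an}$, while horizontal super-edges are these effective-model connections and form a finite-range-dependent field. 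Liggett--Schonmann--Stacey domination plus Lemma \ref{VH} then yield percolation of the origin in $G^{an}$ with probability tending to $1$ as $K\to\infty$.

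The main obstacle is the inductive disjoint-interval construction of the $I_j$ and $N_j$, which requires some bookkeeping to ensure that at each step the new $N_j$ can be chosen so that the bulk of the mass $\sum_n p_n p_{n+N_j}$ lies above all previously used intervals. A secondary subtlety is checking that the Kalikow--Weiss window $L$ does not grow too quickly in $1/\epsilon$ to defeat the Liggett--Schonmann--Stacey step; this is tractable because the effective bond probabilities are bounded below by the fixed constant $1-e^{-\eta/4}$, independent of the hop length (unlike the original $p_n$, which may decay).
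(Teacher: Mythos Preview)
Your route is substantially different from the paper's, and as written it has a real gap in the reduction step.

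The paper does not build an auxiliary long-range model out of hop events. It applies Lemma~\ref{KW} directly to the \emph{original} sequence $(p_n)$ (which satisfies $\sum_n p_n=\infty$; the $\gcd$ condition is dealt with at the end by passing to $d\Z\times\Z$) to connect a short segment $x+\{0,\dots,2\ell\}\times\{0\}$ inside a window of length $L$ on each row. It then chooses a \emph{single} displacement $k>2L$ with $\sum_{n\le M}p_np_{n+k}>\eta$, and defines hop events $R_x^\pm$ that use two vertical bonds and two horizontal bonds to jump by $\pm k$ horizontally and by $+2$ vertically. The $2\ell$ connected base sites give $\ell$ independent tries at each of $R^+$ and $R^-$, so the block event $T_x$ has probability $>1-\epsilon$; because $k>2L$, the $T_x$ are exactly independent and one couples to i.i.d.\ \emph{site} percolation on $\Z_+^2$. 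No reduction to $\lim p_n=0$ and no Liggett--Schonmann--Stacey step are needed. Your scheme trades this single large jump for an infinite family $(N_j,I_j)$ and a second application of KW, at the cost of the dependence you then have to dominate away.

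Your step ``reduce to the case $\lim p_n=0$ (as in Corollary~\ref{coror})'' does not go through: that corollary concerns the oriented graph ${\cal G}$, where $\limsup p_n>0\Rightarrow\sum_n p_n^2=\infty$ feeds Theorem~\ref{mainor}. For $G^{an}$ no analogous result is available in the paper---Theorem~\ref{class1} needs $\sum_n p_np_{n+N}=\infty$ for some fixed $N>0$, and this does not follow from $\sum_n p_n^2=\infty$ (take $p_n=\ind\{n\text{ is a power of }2\}$, which satisfies the hypothesis of Theorem~\ref{class2} but not that of Theorem~\ref{class1}). Since your inductive construction of $(I_j,N_j)$ explicitly ``exploits $p_n\to0$'', the case $\limsup p_n>0$ is simply not covered. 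Two further loose ends: ``thin so that $\gcd\{N_j\}=1$'' is impossible when $\gcd\{n:p_n>0\}=d>1$, because every $N$ with $\sum_n p_np_{n+N}>0$ is then a multiple of $d$ (one must pass to $d\Z\times\Z$ instead); and your claim that the $H^+_{(x,y),j}$ use pairwise disjoint $G^{an}$-bonds as $(x,j)$ varies needs the length-sets $I_j\cup(I_j+N_j)$ to be pairwise disjoint and $N_j>\mathrm{diam}(I_j)$---disjointness of the $I_j$ alone does not prevent, e.g., a first-hop bond of $H^+_{(x+N_j,y),j}$ from coinciding with a second-hop bond of $H^+_{(x,y),j}$.
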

\begin{proof} Suppose that $\gcd \{n;p_n>0\}=1$ and let $\eta>0$ be such that $\limsup_{N\rightarrow\infty}\sum_n p_np_{n+N}=2\eta$. Given any $\epsilon>0$, choose a large integer $\ell$ satisfying
\begin{equation}\label{l}
\left[1-[1-\delta^2(1-e^{-\eta})]^{\ell}\right]>1-\frac{\epsilon}{3}.
\end{equation}

Given $x\in\Z^2$ and an integer $L>2\ell$, define the following event $$A_x(L)=\{x+\{0,1,\dots,2\ell\}\times\{0\} \mbox{ are connected in }x+\{0,1,\dots,L\}\times\{0\}\}.$$ The hypothesis $\limsup_{N\rightarrow\infty}\sum_n p_np_{n+N}>0$ implies that $\sum_n p_n=\infty$, then by Lemma \ref{KW} we can find a large $L$ such that $P(A_x(L))>1-\epsilon/3$.

Now, choose integers $k>2L$ and $M>L$ such that 
\begin{equation}\label{keM}
\sum_{n=1}^M p_np_{n+k}>\eta.
\end{equation}
Define the events
\begin{align*}R_x^+&=\{\langle x;x+(0,1)\rangle\mbox{ and }\langle x+(k,1);x+(k,2)\rangle\mbox{ are open}\}\\
&\cap\left(\cup_{n=1}^M \{\langle x+(0,1);x+(n+k,1)\rangle\mbox{ and }\langle x+(n+k,1);x+(k,1)\rangle\mbox{ are open}\}\right)
\end{align*} and
\begin{align*}R_x^-&=\{\langle x;x+(0,1)\rangle\mbox{ and }\langle x+(-k,1);x+(-k,2)\rangle\mbox{ are open}\}\\
&\cap\left(\cup_{n=1}^M \{\langle x+(0,1);x+(n,1)\rangle\mbox{ and }\langle x+(n,1);x+(-k,1)\rangle\mbox{ are open}\}\right).
\end{align*}

Observe that the events $A_.(L)$ and $R_.^{\pm}$ use only bonds whose length is at most $k+M$, then taking $K=k+M$, it follows that
\begin{align}\nonumber P^K(R_x^{\pm})&=\delta^2[1-\prod_{n=1}^M (1-p_np_{n+k})]\\
\label{PdeR}&\geq \delta^2[1-\exp(-\sum_{n=1}^M p_np_{n+k})]\geq \delta^2(1-e^{-\eta})
\end{align} where in the last inequality we use (\ref{keM}). 

Finally, we define the event $T_x$ (see Figure \ref{fig:T2}) as follows
$$T_x=A_x(L)\cap (\cup_{i=0}^{\ell-1} R^+_{x+(i,0)})\cap (\cup_{i=\ell+1}^{2\ell} R^-_{x+(i,0)}),$$
then
\begin{align}\label{Red}\nonumber P^K(T_x)&\geq P^K(A_x(L))\cdot P^K(\cup_{i=0}^{\ell-1} R^+_{x+(i,0)})\cdot P^K(\cup_{i=\ell+1}^{2\ell} R^-_{x+(i,0)})\\
\nonumber &\geq (1-\frac{\epsilon}{3})\cdot [1-P^K(\cap_{i=0}^{\ell-1} (R^+_{x+(i,0)})^c)]\cdot [1-P^K(\cap_{i=\ell+1}^{2\ell} (R^-_{x+(i,0)})^c)]\\
\nonumber &\geq (1-\frac{\epsilon}{3})\cdot [1-(1-P^K(R^+_{x}))^{\ell}]^2\\
&\geq (1-\frac{\epsilon}{3})\cdot \left[1-[1-\delta^2(1-e^{-\eta})]^{\ell}\right]^2>1-\epsilon
\end{align}
where in the expression above we are using FKG inequality, the independence of $(R^{\pm}_{x+(i,0)})_i$ and (\ref{l}), respectively.

\begin{figure}[t]
\centering
\includegraphics[width=11cm]{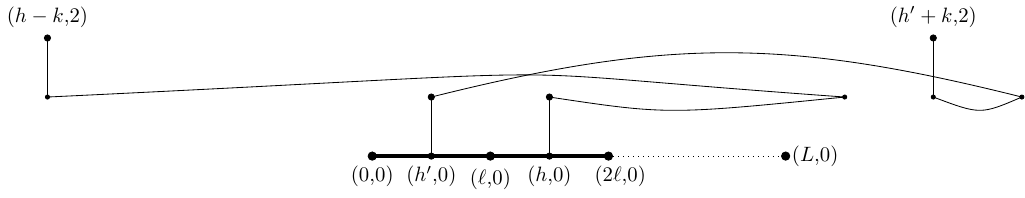}
\caption{The event $T_{(0,0)}$ in the graph $G^{an}$. The thick segment indicates that the vertices therein are connected in the interval $\{0,\dots,L\}\times\{0\}$.}
\label{fig:T2}
\end{figure}

We will construct a site percolation model on the first quadrant of the square lattice $\mathbb{L}^2$. For each site $(v_1,v_2)\in\Z^2_+$, we declare the vertex $(v_1,v_2)$ as {\em red} if and only if the event $T_{(k(v_1-v_2),2(v_1+v_2))}$ occurs. The choice of $k>2L$ and the definition of $T_x$ ensures that all sites are red independently; observe that the path in the event $R^+_x$ ($R^-_x$) starts in the left (respectively right) half of the segment $x+ \{0,\dots,2\ell\}\times\{0\}$. By construction, an infinite path of red sites starting from the origin in $\mathbb{L}^2$ implies in an infinite path of open bonds starting from the origin in $G^{an}$. By (\ref{Red}), each site is red with probability at least $1-\epsilon$; thus ${\displaystyle \lim_{K\to\infty}P^K\{(0,0) \leftrightarrow \infty \} = 1}.$

If $\gcd \{n;p_n>0\}=d>1$, the same proof can be done, with minor modifications, replacing the vertex set $\Z^2$ by $d\Z\times\Z$.
\end{proof}

\begin{remark} The hypotheses of Theorems~\ref{class1} and~\ref{class2} can look strange at first glance. It is an exercise to see that any of these hypotheses are implied by $\sum_n p_n^2 = \infty$, but it is not true the reciprocal affirmation. In the next section, we will give an affirmative answer for the truncation question in an oriented graph under the stronger hypothesis $\sum_n p_n^2 = \infty$.
\end{remark}

\begin{remark}
We finish this section giving examples of sequences where the hypothesis of Theorem~\ref{class1} holds but not that of Theorem~\ref{class2} and vice-versa. Consider the sequences:
\begin{align*}
&p_{n} =
\begin{cases}
k^{-\frac{1}{2}},&\mbox{  if } n=3^k,3^k+1 \mbox{ for some }k,\\
0, &\mbox{   otherwise},
\end{cases}
\end{align*}
and
\begin{align*}
&q_{n}=\begin{cases}
\frac{1}{2\sqrt{k-1}},&\mbox{  if } n\in\{100^k +t3^k;t=1,\dots,k\} \mbox{ for some } k,\\
0,& \mbox{  otherwise.}
\end{cases}
\end{align*}
The sequence $(p_n)_n$ satisfies the hypothesis of Theorem~\ref{class1} but not that of Theorem~\ref{class2}, whilst the opposite situation occurs for the sequence $(q_n)_n$.
\end{remark}

\section{Truncation question on an oriented graph}\label{orient}

\noindent

Let us consider the oriented graph ${\cal G}= (\mathbb{V}({\cal G}),\mathbb{E}({\cal G}))$. The vertex set is $\mathbb{V}({\cal G})=\mathbb{Z}^d\times\mathbb{Z}_+$, elements of $\mathbb{V}({\cal G})$ will be denoted $(x,m)$, where $x \in \mathbb{Z}^d$ and $m \in \mathbb{Z}_+$. The set $\mathbb{E}(\mathcal{G})$ of oriented bonds is
\begin{equation}\{\langle (x,m),(x+n\cdot \vec{e}_i,m+1)\rangle: x \in\mathbb{Z}^d,\;m\in\mathbb{Z}_+,\;i\in\{1,\ldots, d\},\; n\in\mathbb{Z}\}.\label{eq:bonds}\end{equation} 

Again, given a sequence $(p_n)_{n}$ satisfying $\sum_n p_n=\infty$, assume each bond  $\langle (x,m),(x+n\cdot \vec{e}_i,m+1)\rangle$ is open with probability $p_{|n|}$ independently of each other and let $P$ and $P^K$ be the non-truncated and truncated at $K$ probability measures. The event $\{(0, 0)\rightsquigarrow \infty\}$ means that there exists an infinite open oriented path starting from $(0,0)$. 

It was proven in \cite{ELV}, under the hypothesis $\sum_n p_n=\infty$, that $$\lim_{K\rightarrow\infty} P^K\{(0, 0)\rightsquigarrow \infty\}=1$$ for all $d\geq 2$. The case $d=1$ is an open question and a partial answer was given in \cite{AHLV}, more precisely $\lim_{K\rightarrow\infty} P^K\{(0, 0)\rightsquigarrow \infty\}=1$ holds in $d=1$ if $\limsup_{n\rightarrow\infty} p_n>0$. The next theorem improves the result of Theorem 1 of \cite{AHLV} replacing the hypothesis $\limsup_{n\rightarrow\infty} p_n>0$ by $\sum_n p_n^2=\infty$ (that is, some sequences $(p_n)_{n}$ decaying to zero are allowed like $p_n=1/\sqrt{n}$).

\begin{theorem}\label{mainor} For the graph ${\cal G}$ with $d=1$, if  the sequence $(p_n)_n$ satisfies $\sum_n p_n^2=\infty$, the truncation question has an affirmative answer. Moreover, $${\displaystyle \lim_{K\to\infty}P^K\{(0,0) \rightsquigarrow \infty \} = 1}.$$
\end{theorem}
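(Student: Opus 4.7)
The strategy is a renormalization argument stochastically dominating a highly supercritical oriented site percolation on $\mathbb{Z}\times\mathbb{Z}_+$, extending the scheme of \cite{AHLV} by replacing their ``single open long bond'' mechanism (which used $\limsup p_n>0$) with a \emph{two-step construction} tailored to the hypothesis $\sum_n p_n^2=\infty$. Once such a domination is obtained, 2D supercritical oriented site percolation yields percolation on the renormalized lattice, and a standard argument upgrades this to $\lim_{K\to\infty}P^K\{(0,0)\rightsquigarrow\infty\}=1$.

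The basic estimate driving the argument is as follows. Fix $(x,m)$ and, for each $z\in\mathbb{Z}$ with $1\leq|z|\leq K$, let
\[
A_z=\bigl\{\langle(x,m),(x+z,m+1)\rangle\text{ open}\bigr\}\cap\bigl\{\langle(x+z,m+1),(x,m+2)\rangle\text{ open}\bigr\}.
\]
The bonds in distinct $A_z$'s are pairwise disjoint, so the events are independent, each with probability $p_{|z|}^2$. Hence
\[
P^K\left(\bigcup_{1\leq|z|\leq K}A_z\right)\;\geq\;1-\exp\!\left(-2\sum_{n=1}^K p_n^2\right)\longrightarrow 1\quad(K\to\infty).
\]
In other words, for any $\eta>0$, the probability of an open two-step ``return'' path from $(x,m)$ to $(x,m+2)$ exceeds $1-\eta$ for $K$ large enough. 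Next I would fix a coarse-graining scale $L$ and declare the renormalized vertex $(i,j)\in\mathbb{Z}\times\mathbb{Z}_+$ to be \emph{good} if the block $B_{i,j}=[iL,(i+1)L)\times\{2j\}$ satisfies: (i) from every vertex of $B_{i,j}$ there is an open two-step return into $B_{i,j+1}$, and (ii) there exist open two-step ``bridging'' paths from $B_{i,j}$ into each of the neighbouring blocks $B_{i\pm 1,j+1}$. A union bound over the $L$ starting vertices, together with the key estimate, controls (i) and yields $P^K(G_{i,j})\geq 1-\eta'$ with $\eta'$ arbitrarily small, provided $K$ is taken large enough after $L$ is chosen. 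Since each $G_{i,j}$ depends only on bonds in a horizontal window of radius $O(K)$ at two consecutive time levels, the family $\{G_{i,j}\}$ is $k$-dependent for some finite $k$, and the Liggett--Schonmann--Stacey theorem gives stochastic domination by a supercritical Bernoulli oriented site percolation, which percolates.

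The hard step is implementing (ii): two-step return paths do not spread horizontally, and the naive ``transversal'' quantity $\sum_z p_{|z|}p_{|z-L|}$ need not diverge under $\sum p_n^2=\infty$ alone (consider $p_n$ supported on a single parity class). My plan for this step is to bridge by running many independent direct bonds of length $L$: each vertex in $B_{i,j}$ attempts such a bond into an intermediate vertex above $B_{i+1,j}$ at level $2j+1$, and then a two-step return piece completes the passage to $B_{i\pm 1,j+1}$. Because these direct bonds use disjoint edges for distinct starting points, the probability of at least one success is bounded below by $1-(1-p_L)^L\geq 1-e^{-Lp_L}$. The hypothesis $\sum p_n^2=\infty$ combined with $p_n\leq 1$ forces $\limsup_n np_n=\infty$, so one may choose $L$ along a subsequence for which $Lp_L$ is arbitrarily large, making the bridge probability close to $1$. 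The remaining combinatorial task—and the principal source of bookkeeping in the argument—is to partition the intermediate level into disjoint pools used, respectively, by the return paths of (i) and the bridging paths of (ii), so that all the ingredients of $G_{i,j}$ remain independent and the finite-range dependence needed for Liggett--Schonmann--Stacey is preserved.
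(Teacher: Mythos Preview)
Your two-step return mechanism for the ``survival'' part (condition (i)) is exactly the device the paper uses, and your observation that $\sum_n p_n^2=\infty$ forces $\limsup_n np_n=\infty$ is correct. The difficulty is in your bridging step (ii), and here there is a genuine gap.

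As you describe it, the bridge from $B_{i,j}$ to $B_{i+1,j+1}$ consists of a length-$L$ bond from $(x,2j)$ to $(x+L,2j+1)$, ``and then a two-step return piece completes the passage to $B_{i\pm 1,j+1}$.'' But a two-step return from $(x+L,2j+1)$ lands at $(x+L,2j+3)$, overshooting the target level $2j+2$; and in any case a return path stays in the same horizontal column, so it cannot by itself ``complete the passage'' to a neighbouring block. If instead you intend a genuine two-step bridging path $(x,2j)\to(x+L,2j+1)\to(z,2j+2)$ with $z$ in block $i+1$, the second step must have length strictly less than $L$, and your choice of $L$ (along the subsequence where $Lp_L$ is large) gives no control over $p_m$ for $m<L$. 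One can patch this by taking the second step to have the fixed length $k:=\min\{n:p_n>0\}$, yielding per-vertex success probability $p_Lp_k$ and block success $\geq 1-e^{-Lp_Lp_k}\to 1$; but this is not what you wrote, and once $k$ enters the picture there is a much simpler route.

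The paper avoids the large-$L$ bridge entirely. It fixes $k=\min\{n:p_n>0\}$ and uses two consecutive bonds of length $k$ in the \emph{same} direction as the horizontal move: the event $L_{(x,m)}=\{(x,m)\to(x+k,m+1)\to(x+2k,m+2)\text{ both open}\}$ has fixed probability $p_k^2>0$. This is amplified not by spreading over $L$ spatial starting points but by stacking $M$ independent attempts in \emph{time}: the renormalized vertical step is $2(M+1)$, the renormalized horizontal step is only $2k$, and at least one of the $M$ attempts succeeds with probability $1-(1-p_k^2)^M$, made close to $1$ by choosing $M$ large. The survival events $S^\pm$ (your two-step returns) use only bond lengths strictly greater than $k$, so they are automatically edge-disjoint from the $L$-events; and by using rightward returns ($S^+$) at even renormalized columns and leftward returns ($S^-$) at odd ones, the block events $T^\pm$ at adjacent renormalized sites are edge-disjoint, giving exact independence in a Grimmett--Marstrand exploration. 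This removes both your unresolved ``combinatorial task'' and the need for Liggett--Schonmann--Stacey.
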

\begin{proof}
This proof is similar to the proof of Theorem 1 of \cite{AHLV}. It consists in to define a family of special events, showing that they induce a supercritical oriented percolation process on an appropriate renormalized lattice, isomorphic to a subset of $\Z^2_+$.

Our first goal is to define the family of events $T^+_{(x,m)}$ and $T^-_{(x,m)}$ for all $(x,m)\in\Z\times\Z_+$. Define $k=\min\{n\in\N;p_n>0\}$; given any $\epsilon>0$ define large enough integers $M$ and $K$ such that
\begin{equation}\label{M}
(1-p_k^2)^M<\epsilon/3\ ,
\end{equation}
\begin{equation}\label{K}
1-\exp\left[-\sum_{i=k+1}^Kp_i^2\right]\geq \left(1-\frac{\epsilon}{3}\right)^\frac{1}{M+1}.
\end{equation}   

Given a vertex $(x,m)\in\Z\times\Z_+$ and $i\in\Z_+$, we define the following events:
$$R^+_{(x,m)}(i)=\{\langle(x,m);(x+i,m+1)\rangle\mbox{ and }\langle(x+i,m+1);(x,m+2)\rangle\mbox{ are open}\},$$
$$R^-_{(x,m)}(i)=\{\langle(x,m);(x-i,m+1)\rangle\mbox{ and }\langle(x-i,m+1);(x,m+2)\rangle\mbox{ are open}\},$$
$$S^+_{(x,m)}=\cup_{i=k+1}^K R^+_{(x,m)}(i),\quad S^-_{(x,m)}=\cup_{i=k+1}^K R^-_{(x,m)}(i)$$
and
$$L_{(x,m)}=\{\langle(x,m);(x+k,m+1)\rangle\mbox{ and }\langle(x+k,m+1);(x+2k,m+2)\rangle\mbox{ are open}\}.$$
Observe that $P(L_{(x,m)})=p_k^2$; since $(R^\pm_{(x,m)}(i))_i$ are independent events, we have that
$$P(S^\pm_{(x,m)})=1-P\left(\cap_{i=k+1}^K (R^\pm_{(x,m)}(i))^c\right)$$
\begin{equation}\label{S}
=\ \ 1-\prod_{i=k+1}^K(1-p_i^2)\geq 1-\exp\left[-\sum_{i=k+1}^Kp_i^2\right]\geq \left(1-\frac{\epsilon}{3}\right)^\frac{1}{M+1},
\end{equation}
where in the last inequality we used (\ref{K}). Now, define our key events $T_{(x,m)}^+$ and $T_{(x,m)}^-$ as
$$T_{(x,m)}^\pm=\left(\cap_{i=0}^M S^\pm_{(x,m+2i)}\right)\bigcap\left(\cap_{i=0}^M S^\pm_{(x+2k,m+2i)}\right)\bigcap\left(\cup_{i=0}^{M-1} L_{(x,m+2i)}\right).$$

Observing that the events $(S^\pm_{(x,m+2i)})_i,\ (S^\pm_{(x+2k,m+2i)})_i$ and $(L_{(x,m+2i)})_i$ are independent, we have by (\ref{M}) and (\ref{S}) that
\begin{align*}
 P(T_{(x,m)}^{\pm})&=\left(\prod_{i=0}^M P(S_{(x,m+2i)}^{\pm})\right)\cdot\left(\prod_{i=0}^M P(S_{(x+2k,m+2i)}^{\pm})\right)\cdot\left(1-P(\cap_{i=0}^{M-1} (L_{(x,m+2i)})^c)\right)\\
       &\geq \left(1-\frac{\epsilon}{3}\right)^2\left(1-(1-p_k^2)^{M}\right)\geq \left(1-\frac{\epsilon}{3}\right)^3\geq 1-\epsilon.
   \end{align*}
	
\begin{figure}[t]
\centering
\includegraphics[width=7cm]{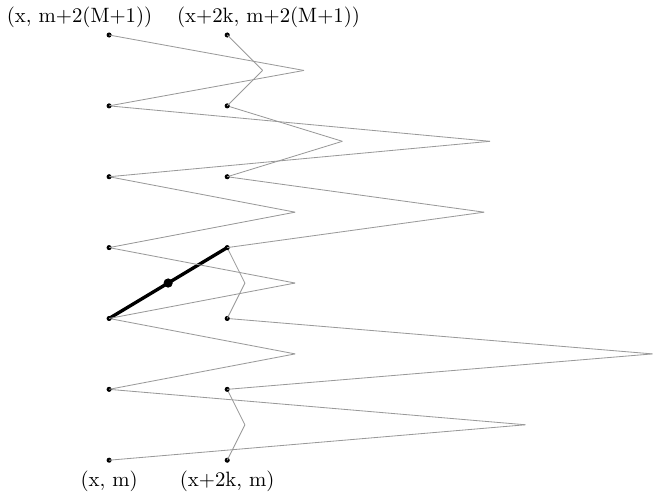}
\caption{The event $T^+_{(x,m)}$ in the graph ${\cal G}$. The edges in bold correspond to the event $L_.$.}
\label{fig:T+}
\end{figure}
It is important to see that all bonds used to define the events $T_{(x,m)}^{\pm}$ have length at most $K$, then $P^K(T_{(x,m)}^{\pm})\geq 1-\epsilon$ also for the truncated measure $P^K$ and furthermore 

\begin{align}\label{T}
       T_{(x,m)}^{\pm}& \subset \{(x,m)\rightsquigarrow (x,m+2(M+1))\}\cap\{(x,m)\rightsquigarrow (x+2k,m+2(M+1))\} . 
   \end{align} 
See Figure \ref{fig:T+} for an illustration of the event $T^+_{(x,m)}$.

Now, define a renormalized graph $G^*=(\mathbb{V}^*,\mathbb{E}^*)$ (an oriented graph), where $\mathbb{V}^*=\{(v,u)\in\mathbb{Z}^2_+; v\leq u\}$ and  $\mathbb{E}^*$ is the set of oriented edges $\mathbb{E}^*=\{\langle(v,u),(w,u+1)\rangle;w=v\mbox{ or }w=v+1\}$. We define each vertex $(v,u)\in \mathbb{V}^*$ of the renormalized as \emph{open} following the rule

\begin{equation*}
\begin{aligned}[c]
 \{(v,u)\text{ is open}\}=\begin{cases}
       T_{(2kv,2(M+1)u)}^+, &\text{ if }  v \text{ is even},\\
       T_{(2kv,2(M+1)u)}^-, &\text{ if }  v \text{ is odd}, 
       \end{cases}
\end{aligned}
\end{equation*} 
and $\{(v,u)\text{ is closed}\}$ otherwise. This appropriate choice of $T^+_.$ or $T^-_.$ holds that the events $(\{(v,u)\text{ is open}\})_{(v,u)}$ are independent, since the set of edges checked for each of these events are disjoint.

Hence,
\begin{equation}\label{bom}P^K\left((v,u)\text{ is open}\right)= P^K(T_{(2kv,2(M+1)u)}^{\pm})>1-\epsilon.
\end{equation}
Furthermore, by (\ref{T})
\begin{align*}
       \left((0,0)\rightsquigarrow (v,u)\right) &\subset \{(0,0)\rightsquigarrow (2kv,2(M+1)(u+1))\}\\
			& \cap\{(0,0)\rightsquigarrow (2k(v+1),2(M+1)(u+1))\} . 
   \end{align*} 

Thus, the cluster of the origin in ${\cal G}$ dominates the oriented site percolation on $G^*$ with parameter $1-\epsilon$. 

Then, we can conclude that 
\[
\displaystyle \lim_{K\to\infty}P^K\{(0,0) \rightsquigarrow \infty \} = 1.
\]
\end{proof}



\noindent

\section*{Acknowledgements}
B.N.B.L. would like to thank and dedicate this paper to his dear friend and PhD advisor Vladas Sidoravicius who told him this fascinating problem a long time ago. Both authors thank Daniel Ungaretti for his valuable comments. The research of B.N.B.L.\ was supported in part by CNPq grant 305811/2018-5 and FAPERJ (Pronex E-26/010.001269/2016). A.M.C. was supported by CNPq. The authors also thank the anonymous referee for his valuable comments.

\end{document}